\theoremstyle{plain}
\newtheorem{prop}{Proposition}[section]
\newtheorem{thm}[prop]{Theorem}
\newtheorem{cor}[prop]{Corollary}
\newtheorem{lemma}[prop]{Lemma}
\newtheorem{con}[prop]{Conjecture}
\theoremstyle{definition}
\newtheorem{dfn}[prop]{Definition}
\newtheorem{rem}[prop]{Remark}
\newcommand{\om}{\omega}
\newcommand{\ep}{\varepsilon}
\renewcommand{\S}{\mathcal S}
\newcommand{\N}{\mathbb N}
\newcommand{\R}{\mathbb R}
\newcommand{\C}{\mathbb C}
\renewcommand{\i}{\mathbbm i}
\newcommand{\kk}{\mathbbm k}
\newcommand{\ax}{\langle {\ushort {X}}\rangle}
\newcommand{\csim}{\stackrel{\mathrm{cyc}}{\thicksim}}
\DeclareMathOperator{\tr}{tr}
\DeclareMathOperator{\Tr}{Tr}
\newcommand{\mato}{\left(\begin{smallmatrix}}
\newcommand{\matc}{\end{smallmatrix}\right)}
\newcommand{\m}{\mathcal M}
\newcommand{\n}{\mathcal N}
\newcommand{\sym}{\mathrm{sym}}
\newcommand{\antisym}{\mathrm{antisym}}
\newcommand\op{{\mathrm{op}}}
\newcommand\id{{\operatorname{id}}}
\begin{document}
\title[Connes' embedding conjecture]{Correction of a proof in "Connes' embedding conjecture and sums of hermitian squares"}

\author[Burgdorf]{Sabine Burgdorf$^\dag$}
\address{S.B., EPFL, SB-MATHGEOM-EGG, Station 8, 1015 Lausanne, Switzerland}
\email{sabine.burgdorf@epfl.ch}
\thanks{\footnotesize ${}^{\dag}$Research partially supported by ERC}

\author[Dykema]{Ken Dykema$^{*}$}
\address{K.D., Department of Mathematics, Texas A\&M University,
College Station, TX 77843-3368, USA}
\email{kdykema@math.tamu.edu}
\thanks{\footnotesize ${}^{*}$Research supported in part by NSF grant DMS-1202660.}

\author[Klep]{Igor Klep}
\address{I.K., Department of Mathematics, The University of Auckland, 
Private Bag 92019, Auckland 1142, New Zealand}
\email{igor.klep@auckland.ac.nz }

\author[Schweighofer]{Markus Schweighofer}
\address{M.S., FB Mathematik und Statistik,
Universitat Konstanz, D-78457 Konstanz,  Germany}
\email{markus.schweighofer@uni-konstanz.de}

\subjclass[2000]{46L10 (11E25, 13J30)}

\keywords{Connes' embedding problem, real von Neumann algebras}

\date{March 13, 2013}

\begin{abstract}
We show that Connes' embedding conjecture (CEC) is equivalent to a real version of the same (RCEC).
Moreover, we show that
RCEC is equivalent to a real, purely algebraic statement concerning trace positive polynomials.
This purely
algebraic reformulation of CEC had previously been given in both a real and a complex version in a paper of the last two
authors.
The second author discovered a gap in this earlier proof of the equivalence of CEC to the real algebraic reformulation (the proof of the complex algebraic reformulation being correct).
In this note, we show that this gap can be filled with help of the theory of real von Neumann algebras.
\end{abstract}

\maketitle

\vspace{-2pt}
\section{Introduction and erratum for \cite{ksconnes}}

Alain Connes stated in 1976 the following conjecture \cite{connes}*{Section V}.

\begin{con}[CEC]\label{concon}
If $\om$ is a free ultrafilter on $\N$ and $\mathcal F$ is a 
II$_1$-factor with separable predual, then 
$\mathcal F$ can be embedded into an ultrapower $\mathcal R^\om$ of the hyperfinite II$_1$-factor $\mathcal R$.
\end{con}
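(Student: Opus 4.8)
The plan is to establish CEC through the matricial microstates characterization of embeddability into $\mathcal R^\om$. Fix a II$_1$-factor $\mathcal F$ with separable predual and normalized trace $\tau$. Since the predual is separable, $\mathcal F$ is generated as a von Neumann algebra by a countable family of self-adjoint elements, which after scaling I may assume to lie in the unit ball. An embedding $\mathcal F\hookrightarrow\mathcal R^\om$ is encoded by a representing sequence of approximately trace-preserving, approximately multiplicative maps; by a standard diagonal reindexing along $\om$, producing such an embedding is equivalent to the following purely local data: for every finite tuple $(a_1,\dots,a_n)$ of self-adjoint generators, every degree $d$, and every $\ep>0$, there exist $m\in\N$ and self-adjoint matrices $A_1,\dots,A_n\in M_m(\C)$ with
\[
\bigl|\tau(a_{i_1}\cdots a_{i_p})-\tr_m(A_{i_1}\cdots A_{i_p})\bigr|<\ep
\]
for all words of length $p\le d$. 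Thus the entire problem reduces to producing these matricial microstates for every finite subtuple.

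Next I would recast the existence of microstates as a separation problem on symmetric noncommutative polynomials, which is exactly the bridge to the trace-positive reformulation described in the abstract. The tracial moment functional $f\mapsto\tau(f(a_1,\dots,a_n))$ is a linear functional on the space of symmetric polynomials in $X_1,\dots,X_n$ that is nonnegative on every sum of hermitian squares modulo commutators, while the normalized matrix traces $\tr_m$ span a weak-$*$ dense family of such functionals on the finite-dimensional side. Microstates exist for $(a_1,\dots,a_n)$ up to every tolerance precisely when $\tau\circ a$ lies in the weak-$*$ closure of the cone generated by the matricial traces. By Hahn--Banach separation, failure of this membership would produce a symmetric polynomial $f$ that is trace-positive on all tuples of matrices yet satisfies $\tau(f(a))<0$; that is, a matrix-trace-positive $f$ lying outside the $\ep$-closure of the cone of sums of hermitian squares plus commutators.

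The final step is therefore to prove the Positivstellensatz-type assertion that every matrix-trace-positive symmetric polynomial does lie in the closure of the cone of sums of hermitian squares plus commutators. Granting this certificate, the separation above is impossible, so the required microstates exist for each finite subtuple, each degree, and each tolerance; assembling them along the ultrafilter $\om$ then yields a single trace-preserving $\ast$-homomorphism $\mathcal F\hookrightarrow\mathcal R^\om$, which is automatically injective because $\tau$ is faithful. This completes the reduction to the algebraic closure statement.

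The hard part --- indeed the whole crux --- is exactly this last closure assertion: that matrix-trace-positivity forces membership in the closed cone of hermitian squares modulo commutators. No general mechanism is known that manufactures such a certificate, and it is precisely this gap that leaves CEC unresolved in general; the available techniques (free-probabilistic matrix models, amenability, quasidiagonality) supply microstates only for restricted classes of factors. I expect the argument to go through verbatim once this Positivstellensatz is available, so that establishing the closure statement is both the main obstacle and the sole remaining ingredient.
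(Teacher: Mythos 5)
There is a genuine and unfixable gap, and in fact the target itself is misidentified: the statement you are proving is Conjecture~\ref{concon} (CEC), which is an \emph{open conjecture} that the paper never proves. The paper's entire content is a chain of equivalences --- CEC $\Leftrightarrow$ the trace-positivity/cyclic-equivalence statement (Theorems~\ref{thm:algcon} and~\ref{thm:Ralgcon}) $\Leftrightarrow$ RCEC --- established via real von Neumann algebras; nowhere is CEC itself asserted, let alone proved. Your proposal reduces CEC (via matricial microstates and Hahn--Banach separation) to the assertion that every matrix-trace-positive symmetric polynomial $f$ has $f+\ep$ cyclically equivalent to an element of the quadratic module for every $\ep>0$. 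But that assertion is \emph{precisely} statement (b) of Theorem~\ref{thm:algcon}/\ref{thm:Ralgcon}, which the paper shows is \emph{equivalent} to CEC. So the step ``granting this certificate'' is granting the conjecture itself: your argument is circular, reproducing in sketch form the known reduction (essentially the Klep--Schweighofer direction (b)$\implies$(a)) and then assuming the hard half. To your credit, you say this explicitly in your last paragraph --- which means what you have written is an (accurate) survey of the equivalence, not a proof of the statement.

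Two smaller technical points, in case you pursue the equivalence itself rather than CEC. First, the Hahn--Banach separation step needs more care than stated: the cone of matricial trace functionals is not obviously closed in the relevant topology, which is why the algebraic formulation works with the quadratic module $M_\KK$ generated by $1-X_i^2$ (encoding contractivity) together with cyclic equivalence, and why the conclusion is only that $f+\ep$ lies in the cone for every $\ep>0$ rather than $f$ itself. Second, if you work over $\R$ with symmetric matrices, as your phrasing suggests, the direction (b)$\implies$(a) is exactly where the original proof in the Klep--Schweighofer paper broke down (their Proposition~2.3 is false, with counterexample $\i(X_1X_2X_3-X_3X_2X_1)$); the present paper repairs it by passing through RCEC, using Corollary~\ref{cor:II1symm} (every real type II$_1$ von Neumann algebra is generated by symmetric elements) and the opposite-algebra trick of Proposition~\ref{prop:tens}. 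None of this machinery, however, produces the Positivstellensatz-type certificate your final step requires, because no proof of it can exist short of resolving CEC.
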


The last two authors gave a purely algebraic statement which
is equivalent to Conjecture \ref{concon}, {\em cf.} statements (i) and (ii) in \cite{ksconnes}*{Thm.\ 3.18}.
Before stating this
we recall some notation used in~\cite{ksconnes}.
For $\kk\in\{\R,\C\}$, $\kk\ax$ denotes the polynomial ring in $n$ non-commuting self-adjoint variables $\ushort X=(X_1,\dots,X_n)$ over $\kk$, which is equipped with the natural involution $f\mapsto f^*$,
i.e.\ it is the natural involution on $\kk$, fixes each $X_i$ and reverses the order of words.
Then $M_\kk$ denotes the quadratic module in $\kk\ax$ generated by $\{1-X_i^2\mid i=1,\dots,n\}$.
Two polynomials $f,g\in\kk\ax$ are said to be cyclically equivalent ($f\csim g$)
if $f-g$ is a sum of commutators in $\kk\ax$. 

\begin{thm}[Klep, Schweighofer]\label{thm:algcon}
The following statements are equivalent:
\begin{enumerate}[{\rm(a)}]\itemsep0pt
\item 
{\rm CEC} is true.
\item If $f\in\C\ax$ and if $\tr(f(\ushort A))\ge0$ for all tuples $\ushort A$ of
self-adjoint contractions in $\C^{s\times s}$ for all $s\in\N$,
then for every $\ep\in\R_{>0}$, $f+\ep$ is cyclically equivalent to an element of $M_\C$.
\end{enumerate}
\end{thm}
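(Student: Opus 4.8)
The plan is to translate both (a) and (b) into dual statements about a single object---the set of tracial states on $\C\ax$---and then to play them off against one another by Hahn--Banach separation. First I would recall the von Neumann algebraic reformulation of CEC. Via the GNS construction, a unital linear functional $L$ on $\C\ax$ that is positive ($L(p^*p)\ge0$), tracial ($L$ annihilates every commutator, equivalently $L(f)=L(g)$ whenever $f\csim g$), and nonnegative on $M_\C$ corresponds exactly to a finite von Neumann algebra $(\mathcal N,\tau)$ with a faithful normal trace together with a distinguished tuple of self-adjoint contractions $\ushort A=(A_1,\dots,A_n)$ generating it; nonnegativity on $M_\C$ is precisely what forces $1-A_i^2\ge0$. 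Writing $S$ for the set of all such functionals, $\mathcal R^\om$-embeddability of $(\mathcal N,\tau)$ is equivalent to $\tau$ being a weak-$*$ limit of matricial traces $f\mapsto\tr(f(\ushort B))$, where $\ushort B\in(\C^{s\times s})^n$ is a tuple of self-adjoint contractions and $\tr$ is the normalized trace. Using that every finite von Neumann algebra with separable predual embeds trace-preservingly into a II$_1$ factor and that $\mathcal R^\om$-embeddability passes to subalgebras (and letting $n$ vary), this identifies CEC with the assertion that the matricial traces are weak-$*$ dense in $S$.

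Next I would set up the order-unit duality. Let $V$ be the real vector space of symmetric polynomials modulo $\csim$, let $K\subseteq V$ be the image of $\{m\in M_\C : m=m^*\}$, and take $1$ as distinguished element. The one genuinely algebraic ingredient is that $M_\C$ is Archimedean: since $\sum_i(1-X_i^2)\in M_\C$, one bounds every symmetric polynomial to obtain $N-f\in M_\C$ for some $N\in\N$, so that $1$ is an order unit for $K$. Consequently the archimedean closure of $K$ is exactly $\{f\in V : f+\ep\csim m$ for some $m\in M_\C$, for every $\ep\in\R_{>0}\}$, and by the bipolar theorem in order-unit spaces this closure equals the set of $f$ with $L(f)\ge0$ for every state $L$ of $(V,1)$---that is, for every $L\in S$. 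Since $f$ being trace-positive means exactly that $L(f)\ge0$ for every matricial trace $L$, statement (b) becomes the single clause: every functional-nonnegativity that holds for all matricial traces already holds for all of $S$.

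With this dictionary in hand both implications are short. For (a)$\Rightarrow$(b): if a trace-positive $f$ were not in the archimedean closure of $K$, separation would furnish some $L\in S$ with $L(f)<0$; its GNS algebra embeds into $\mathcal R^\om$ by CEC, so $L(f)$ is a limit of values $\tr(f(\ushort B))\ge0$, a contradiction. For (b)$\Rightarrow$(a): if the matricial traces were not weak-$*$ dense in $S$, separation would give $L\in S$ together with a symmetric $f$ satisfying $\tr(f(\ushort B))\ge0$ for all contractions $\ushort B$ yet $L(f)<0$; applying (b), a relation $f+\ep\csim m\in M_\C$ yields $L(f)+\ep=L(m)\ge0$ for every $\ep\in\R_{>0}$, hence $L(f)\ge0$, again a contradiction.

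I expect the main obstacle to lie entirely in the first paragraph: rigorously pinning down the correspondence between $S$ and tracial von Neumann algebras, the microstate characterization of $\mathcal R^\om$-embeddability, and the reduction between the II$_1$-factor formulation of CEC and weak-$*$ density in $S$ (keeping careful track of separability of the predual and of the passage over all $n$). Once these von Neumann algebraic facts are secured, the Hahn--Banach separations and the identification of the archimedean closure are formal, and indeed it is exactly this von Neumann algebraic layer---where real versus complex scalars matter---that the present note must revisit.
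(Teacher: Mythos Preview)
The paper does not give its own proof of this theorem; it is stated with attribution to Klep--Schweighofer and the reader is referred to \cite{ksconnes}*{Thm.~3.18}. What the present note proves is the \emph{real} analogue, Theorem~\ref{thm:Ralgcon}, and for that it relies on Proposition~\ref{realprop317}, which is explicitly described as the real version of \cite{ksconnes}*{Prop.~3.17}. From the shape of Proposition~\ref{realprop317} one can read off the architecture of the original argument: the equivalence of $\mathcal R^\om$-embeddability with a microstate/approximation condition (items (i) and (iv)), and the intermediate trace-positivity condition (item (iii)). Your GNS/duality sketch reproduces exactly this architecture: the set $S$ of tracial states nonnegative on $M_\C$ is the state space of the order-unit space $(V,K,1)$; the Archimedean property of $M_\C$ identifies the bidual cone with the archimedean closure; and the two Hahn--Banach separations you describe are precisely how \cite{ksconnes} links conditions (iii)/(iv) of its Proposition~3.17 to the purely algebraic statement. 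So your approach is the same as the paper's cited source, and your self-assessment is accurate: the only nontrivial content sits in the operator-algebraic first paragraph (GNS, passage to a II$_1$-factor with separable predual, microstates), while the convexity layer is routine.

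One small point worth tightening in the (b)$\Rightarrow$(a) direction: when you separate $L\in S$ from the closure of the matricial traces, Hahn--Banach first gives an $f$ and a constant $c$ with $L(f)<c\le\tr(f(\ushort B))$; you then replace $f$ by $f-c$ (using that all functionals in $S$ are unital) to get the normalized inequality $L(f)<0\le\tr(f(\ushort B))$ you wrote down. Also, the matricial traces are not literally convex, so one separates from their closed convex hull; trace-positivity of $f$ on that hull is the same as on the matricial traces themselves. These are cosmetic, not gaps.
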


In the same paper the authors gave the following real version of the purely algebraic reformulation of Connes' embedding conjecture.

\begin{thm}\label{thm:Ralgcon}
The following statements are equivalent:
\begin{enumerate}[{\rm(a)}]\itemsep0pt
\item 
{\rm CEC} is true.
\item If $f=f^*\in\R\ax$ and if $\tr(f(\ushort A))\ge0$ for all tuples $\ushort A$ of
symmetric contractions in $\R^{s\times s}$ for all $s\in\N$,
then for every $\ep\in\R_{>0}$, $f+\ep$ is cyclically equivalent to an element of $M_\R$.
\end{enumerate}
\end{thm}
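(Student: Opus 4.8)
The plan is to deduce Theorem~\ref{thm:Ralgcon} from the complex reformulation in Theorem~\ref{thm:algcon} by routing everything through a \emph{real} Connes embedding conjecture and a real analogue of the trace-positivity machinery. Concretely, I would first isolate the statement RCEC, asserting that every real II$_1$ factor with separable predual embeds, trace-preservingly, into an ultrapower $\mathcal R_\R^\om$ of the hyperfinite real II$_1$ factor $\mathcal R_\R$, and prove CEC $\Leftrightarrow$ RCEC. Here the theory of real von Neumann algebras enters decisively: complexification $\mathcal N\mapsto\mathcal N_\C=\C\otimes_\R\mathcal N$ and realification are mutually inverse up to the real/complex/quaternionic trichotomy, they carry $\mathcal R_\R$ to $\mathcal R$, and they commute up to isomorphism with the ultrapower construction. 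An embedding $\mathcal F\hookrightarrow\mathcal R^\om$ then produces an embedding of a suitable real form into $\mathcal R_\R^\om$ and conversely, so the two conjectures are equivalent.

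Next I would prove RCEC $\Leftrightarrow$ (b) by mirroring, over $\R$, the argument underlying Theorem~\ref{thm:algcon}, the bridge being a real GNS/separation construction. For RCEC $\Rightarrow$ (b): given $f=f^*\in\R\ax$ with $\tr(f(\ushort A))\ge0$ for all symmetric contractions $\ushort A$, the matricial traces weak$^*$-approximate exactly the tracial states arising from embeddable real tracial von Neumann algebras; under RCEC these are all finite real tracial states, so $\tau(f)\ge0$ for every real trace $\tau$ on the universal real $C^*$-algebra generated by $n$ self-adjoint contractions. A real Hahn--Banach separation applied to the cone generated by $M_\R$ together with the sums of commutators then shows that if $f+\ep$ were not cyclically equivalent to an element of $M_\R$ for some $\ep\in\R_{>0}$, one could separate off a positive unital functional $L$ with $L(f+\ep)\le0$; since $L$ vanishes on commutators and is nonnegative on hermitian squares and on $h^*(1-X_i^2)h$, its real GNS representation yields a finite real trace $\tau$ with $\tau(f)\le-\ep<0$, a contradiction. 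For (b) $\Rightarrow$ RCEC I would argue contrapositively: a real factor failing to embed produces a finite real trace that is not weak$^*$-approximable by matricial ones, and separating it from the approximable cone yields an $f=f^*\in\R\ax$ that is matricially trace-positive yet not cyclically equivalent to an element of $M_\R+\ep$, violating (b).

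The main obstacle, and precisely the point where the earlier argument had a gap, is the real case of this separation/representation step. Over $\C$ one passes freely to self-adjoint parts and the GNS representation lands in a genuine complex tracial von Neumann algebra; over $\R$ one must verify that the separating functional produces a bona fide \emph{real} trace, being careful that the associated real factor may be of complex or quaternionic type, and that symmetric real matrix tuples suffice to approximate all real tracial states rather than only those of real type. Checking that the cone $M_\R$ modulo commutators is suitably closed in $\R\ax$, that its dual cone is described exactly by the real tracial states, and that the $\H$-summands are handled correctly in the matricial approximation, is the technical heart of the proof and is exactly where real von Neumann algebra theory, as opposed to a naive complexification argument, becomes indispensable.
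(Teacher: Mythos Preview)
Your broad architecture --- introduce RCEC and prove (a)$\Leftrightarrow$RCEC$\Leftrightarrow$(b) --- is exactly the paper's, but two of your proposed implications rest on claims that do not hold as stated, and you have misplaced the real difficulty.

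First, your direct proof of CEC $\Leftrightarrow$ RCEC by ``complexification and realification are mutually inverse up to the trichotomy'' does not go through. These functors are \emph{not} mutually inverse: the realification of a complex factor has two-dimensional real center, hence is not a real factor, and conversely an arbitrary complex II$_1$ factor $\mathcal F$ need not possess any involutory $*$-antiautomorphism, so there may be no real factor $\mathcal F_r$ with $\mathcal F=\mathcal F_r+\i\mathcal F_r$. The paper handles RCEC $\Rightarrow$ CEC not by realifying $\mathcal F$ but by passing to $\mathcal F\,\overline{\otimes}\,\mathcal F^{\op}$, which \emph{always} carries the flip $*$-antiautomorphism $a\otimes b\mapsto b\otimes a$; one then applies RCEC to the associated real factor and reads off an embedding of $\mathcal F\cong\mathcal F\otimes1$. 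In the other direction, the paper does not attempt CEC $\Rightarrow$ RCEC directly at all (an embedding of the complexification $\mathcal F_r+\i\mathcal F_r\hookrightarrow\mathcal R^\om$ has no reason to respect the real structure); instead it uses the already-correct implication (a)$\Rightarrow$(b) from \cite{ksconnes} and then proves (b)$\Rightarrow$RCEC.

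Second, the genuine obstacle in (b)$\Rightarrow$RCEC is not the real GNS/separation step you flag --- that part transfers routinely --- but rather the fact that the whole machinery only tests \emph{symmetric} tuples. To pass from ``moments of symmetric contractions in $\mathcal F_r$ are matricially approximable'' to ``$\mathcal F_r$ embeds in $\mathcal R_r^\om$'' one needs $\mathcal F_r$ to be \emph{generated} by symmetric elements, and this is not automatic for real von Neumann algebras (antisymmetric elements are not polynomials in symmetric ones in general). The paper isolates this as Corollary~\ref{cor:II1symm}, proved via Ozawa's Proposition~\ref{prop:Taka}: every real II$_1$ von Neumann algebra is isomorphic to $M_2(\m_r)$ with the standard adjoint, and $2\times2$ matrices over anything are visibly generated by symmetric elements. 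This is the step you are missing, and it is precisely the repair of the original gap --- the failure of \cite[Prop.~2.3]{ksconnes} is that real symmetric trace-positivity does \emph{not} imply complex self-adjoint trace-positivity, not a defect in the GNS construction.
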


However, their proof of the implication (b)$\implies$(a) in
Theorem~\ref{thm:Ralgcon} is not correct.
The incorrect part of that argument is Proposition~2.3 of~\cite{ksconnes};
in fact, the polynomial $f=\i(X_1X_2X_3-X_3X_2X_1)$
provides a counter-example to the statement of that proposition.
In this note, we present a proof of Theorem~\ref{thm:Ralgcon} that uses real von Neumann algebras
as well as techniques 
and results from~\cite{ksconnes}.

An inspection of the proof of \cite[Proposition 2.3]{ksconnes}
shows that a weaker version of it remains true, namely the version 
where $\R$ is replaced by $\C$ in its formulation. This weaker version is
enough for the first application of \cite[Proposition 2.3]{ksconnes}, namely, in the proof of
\cite[Theorem 3.12]{ksconnes}.
It is the second application, namely, in the proof of \cite[Theorem 3.18]{ksconnes}, that is
illegitimate and will be circumvented by this note.
In particular, it will follow that
the statements of all results in \cite{ksconnes} are correct with the exception of \cite[Proposition 2.3]{ksconnes}.

We note that Narutaka Ozawa~\cite{Oz} provides a proof that uses similar (though not precisely identical) methods
to the one presented here;
our proofs were begun independently and, initially, completed independently;
however, after release of a first version of this paper, Ozawa noticed a problem with our proof and
provided a result (Proposition~\ref{prop:Taka} below) that fixed it, which he kindly allows us
to print here.

\section{Real von Neumann algebras}
\label{sec:RvN}

Real von Neumann algebras
were first systematically studied in the 1960s 
by E.\ St\o{}rmer (see~\cite{St68}, \cite{St67}).
They are closely related to von Neumann algebras with  involutory $*$-anti\-auto\-morph\-isms.
Through this relation much of the structure theory of von Neumann algebras
(e.g.\ the type classification and the integral decomposition into factors)  can be transferred to the real case.
See, for example, \cite{aru} (or~\cite{Li}, where the definition is slightly different but easily seen to be equivalent).

\begin{dfn}\label{efn:rvN}
A real von Neumann algebra $\m_r$ is a unital, weakly closed, real, self-adjoint subalgebra of the (real) algebra
bounded linear operators
on a complex Hilbert space, with the property $\m_r\cap \i\m_r=\{0\}$.
\end{dfn}

\begin{rem}\label{rem:complexification}
A basic fact of real von Neumann algebras (see, e.g., the introduction of \cite{aru}, or references cited therein)
is that they correspond 
to (complex) von Neumann algebras with involutory $*$-antiautomorphism.
An involutory 
$*$-antiautomorphism on a von Neumann algebra $\m$ is a complex linear map $\alpha:\m\to\m$ satisfying
$\alpha(x^*)=\alpha(x)^*$,
$\alpha(xy)=\alpha(y)\alpha(x)$ and $\alpha^2(x)=x$ for all $x,y\in\m$. 
This correspondence works as follows.
\begin{enumerate}[(i)]
\item Let $\m_r$ be a real von Neumann algebra. Then the (complex) von Neumann algebra $\mathcal U(\m_r)$ 
generated by $\m_r$ is equal to the complexification of $\m_r$ \cite{St68}*{Thm.\ 2.4}, i.e. $$\mathcal U(\m_r)=\m_r''=\m_r+\i\m_r.$$ 
Moreover, the involution $*$ on $\m_r$ generates a natural involutory $*$-antiautomorphism $\alpha$ on 
$\mathcal U(\m_r)$ by $$\alpha(x+\i y)=x^*+\i y^*.$$
\item Let $\m$ be a von Neumann algebra with involutory $*$-antiautomorphism $\alpha$, the $*$-subalgebra
 $$\m_\alpha=\{x\in\m\mid \alpha(x)=x^*\}$$ 
is then a real von Neumann algebra. In fact, let $x=\i y\in\m_\alpha\cap\i\m_\alpha$ with $x,y\in\m_\alpha$, then 
$x^*=\alpha(x)=i\alpha(y)=(-y)^*=-x^*$ and thus $x^*=0$, which implies $\m_\alpha\cap\i \m_\alpha=\{0\}$. 
Since $*$ and $\alpha$ are continuous in the weak topology, $\m_\alpha$ is weakly closed.
\end{enumerate}
One then easily sees that $\mathcal U(\m_\alpha)=\m$ and $\mathcal U(\m_r)_{\alpha}=\m_r$. 
\end{rem}

\begin{dfn} Let $\m_r$ be a real von Neumann algebra. 
\begin{enumerate}[(i)]\itemsep0pt
\item $\m_r$ is called a real factor if its center $Z(\m_r)$ consists of only the real scalar operators. 
\item $\m_r$ is said to be hyperfinite if there exists an increasing sequence of finite dimensional
real von Neumann subalgebras of $\m_r$ such that its union is weakly dense in $\m_r$.
\item $\m_r$  of type I$_n$,  I$_\infty$, II$_1$, etc. if its complexification $\mathcal U(\m_r)$ is of the corresponding type.
\end{enumerate}
\end{dfn}

We immediately see that a real von Neumann algebra $\m_r$ is a factor if and only if $\mathcal U(\m_r)$ is a factor.

If $\m_r$ is a real hyperfinite factor, then $\mathcal U(\m_r)$ is a hyperfinite factor \cite{aru}*{Prop.\ 2.5.10}. 
But the converse implication does not hold in general, see e.g. \cite{aru}*{Ch.\ 2.5}. The situation is different 
for the hyperfinite II$_1$-factors. There is (up to isomorphism) a unique real hyperfinite II$_1$-factor 
\cite{St80}*{Thm.\ 2.1} (see also~\cite{G83}).

\begin{thm}[St\o rmer] \label{realhyper}
Let $\m$ be a type II$_1$-factor and $\m_r$ a real factor such that $\m=\m_r+\i \m_r$. Then the following conditions are equivalent.
\begin{enumerate}[\rm(i)]\itemsep0pt
\item $\m_r$ is hyperfinite.
\item $\m_r$ is the weak closure of the union of an increasing sequence $\{R_n\}$ of real unital
factors such that $R_n$ is isomorphic to $\R^{2^n\times 2^n}$.
\item $\m_r$ is countably generated, and given $x_1,\dots,x_n\in \m_r$  and $\ep>0$ there exist a
finite dimensional real von Neumann subalgebra $\n_r$ of $\m_r$ and $y_1,\dots,y_n\in \n_r$ such
that $\|y_k-x_k \|_2<\ep$ for all $k=1,\dots,n$. 
\end{enumerate}
\end{thm}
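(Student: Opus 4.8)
The plan is to prove the cyclic chain of implications (ii) $\Rightarrow$ (i) $\Rightarrow$ (iii) $\Rightarrow$ (ii), working throughout with the complexification $\m = \mathcal U(\m_r) = \m_r + \i\m_r$ and its canonical involutory $*$-antiautomorphism $\alpha$ from Remark~\ref{rem:complexification}, so that $\m_r = \{x \in \m : \alpha(x) = x^*\}$. Since $\m$ is a type II$_1$ factor it carries a unique faithful normal tracial state $\tau$, whose restriction to $\m_r$ is a faithful real-valued trace; write $\|x\|_2 = \tau(x^*x)^{1/2}$. The implication (ii) $\Rightarrow$ (i) is immediate, since each $R_n \cong \R^{2^n\times 2^n}$ is in particular a finite-dimensional real von Neumann subalgebra of $\m_r$, and an increasing sequence with weakly dense union is exactly what hyperfiniteness demands.

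For (i) $\Rightarrow$ (iii), let $\{A_k\}$ be an increasing sequence of finite-dimensional real von Neumann subalgebras with $\bigcup_k A_k$ weakly dense in $\m_r$. First I would get countable generation by choosing a countable $\|\cdot\|_2$-dense subset of each $A_k$; its union generates $\m_r$. For the approximation property I would use that on bounded subsets of a finite von Neumann algebra the weak topology and the $\|\cdot\|_2$-topology coincide; hence, via the Kaplansky density theorem (which holds in the real setting, or may be lifted from $\m$), every element of $\m_r$ is a $\|\cdot\|_2$-limit of elements of $\bigcup_k A_k$. Given $x_1, \dots, x_n$ and $\ep > 0$, I then pick $m$ so large that $A_m$ contains approximants $y_1, \dots, y_n$ with $\|y_k - x_k\|_2 < \ep$, and take $\n_r = A_m$.

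The substantive implication is (iii) $\Rightarrow$ (ii). Using countable generation I fix a sequence $\{x_j\}_{j\in\N}$ generating $\m_r$, and construct inductively an increasing chain of real unital subfactors $R_n \subseteq R_{n+1} \subseteq \m_r$ with $R_n \cong \R^{2^n\times 2^n}$ such that each of $x_1, \dots, x_n$ lies within $\|\cdot\|_2$-distance $2^{-n}$ of $R_n$; the weak closure of $\bigcup_n R_n$ then contains a $\|\cdot\|_2$-dense set of generators and hence equals $\m_r$. The inductive step is a doubling trick: given $R_n$, the relative commutant $R_n' \cap \m$ is again a type II$_1$ factor, and it is $\alpha$-invariant (since $R_n \subseteq \m_\alpha$ is $*$-stable, $\alpha(R_n) = R_n$, and a short computation using that $\alpha$ is an antiautomorphism shows $\alpha$ carries $R_n' \cap \m$ into itself), so its fixed-point algebra $R_n' \cap \m_r$ is a real II$_1$ factor. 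Inside it I find a real $2\times 2$ matrix subfactor — a projection $p$ of trace $\tfrac12$ together with a self-adjoint symmetry exchanging $p$ and $1-p$, all chosen in $\m_r$ — and set $R_{n+1}$ to be the real unital subfactor generated by $R_n$ and this matrix subfactor, so that $R_{n+1} \cong R_n \otimes \R^{2\times2} \cong \R^{2^{n+1}\times 2^{n+1}}$. Property (iii) is then invoked to ensure that at each stage a finite-dimensional approximant to $x_{n+1}$ is available, a perturbation argument reconciling an arbitrary finite-dimensional approximant with the matrix tower being built.

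The hard part is the reality bookkeeping in the inductive step. Over $\C$ one freely manufactures matrix units from partial isometries and spectral projections, but here every matrix unit must be chosen inside $\m_r = \m_\alpha$, and one must guarantee that the subfactor generated is of real type I (isomorphic to $\R^{2\times2}$) rather than of complex or quaternionic type. This forces working with self-adjoint symmetries $u = u^* = u^{-1}$ satisfying $\alpha(u) = u$ in place of general partial isometries, and verifying that the $\alpha$-fixed relative commutant always contains such a symmetry together with a trace-$\tfrac12$ projection. The constraint $\m_r \cap \i\m_r = \{0\}$ together with the uniqueness of the real hyperfinite II$_1$ factor quoted above are what I expect to make these choices possible at every stage and to force the chain of real matrix factors to exhaust $\m_r$.
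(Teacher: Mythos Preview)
The paper does not supply a proof of this theorem; it is quoted from St\o rmer \cite{St80} with attribution and used as a black box. There is thus no proof in the paper against which to compare your proposal.

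On the substance of your attempt: the implications (ii)$\Rightarrow$(i)$\Rightarrow$(iii) are handled correctly. Your sketch of (iii)$\Rightarrow$(ii) has the right architecture --- build a tower of real $2^n\times 2^n$ matrix factors by successively splitting off a copy of $\R^{2\times 2}$ from the relative commutant --- but the two steps you yourself flag as hard are left unproved. Producing a trace-$\tfrac12$ projection together with a self-adjoint symmetry in $\m_r$ exchanging it with its complement is essentially the content of Proposition~\ref{prop:Taka} of this very paper, which the authors credit to Ozawa and treat as a nontrivial input; you cannot assume it. The ``perturbation argument reconciling an arbitrary finite-dimensional approximant with the matrix tower'' is the heart of the Murray--von Neumann proof already in the complex case, and in the real case carries the additional burden that the finite-dimensional real subalgebra $\n_r$ supplied by (iii) may have complex or quaternionic summands which must be absorbed into a tower of \emph{real} matrix factors. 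Finally, appealing to the uniqueness of the real hyperfinite II$_1$ factor to close the argument risks circularity, since that uniqueness is part of the same theorem of St\o rmer you are trying to prove.
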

From now on we will denote the unique real hyperfinite II$_1$-factor by $\mathcal R_r$. 

\begin{rem}\label{rem:power}
The correspondence between $\mathcal R$ and $\mathcal R_r$ in the hyperfinite case of
Remark \ref{rem:complexification} is given by the   
involutory $*$-antiautomorphism on $\mathcal R$ which is induced by the matrix transpose.
To be more specific, with $\mathcal R$ the closure of the infinite tensor product of matrix algebras $\C^{2\times 2}$
and letting $t_2$ be the
matrix transpose on $\C^{2\times 2}$,
then  we define $\alpha$ to be the the involutory $*$-antiautomoprhism on $\mathcal R$
that when restricted to $\bigotimes_1^\infty \C^{2\times 2}$
is $\otimes_1^\infty t_2$;
see also \cite{St80}*{Cor.\ 2.10}.
Then $\mathcal R_\alpha=\mathcal R_r$ and hence 
$\mathcal R=\mathcal R_r+\i\mathcal R_r$. 
\end{rem}

The construction of the ultrapower of the real hyperfinite II$_1$-factor $\mathcal R_r$ with trace $\tau$ works as in the complex case, see e.g. \cite{St80}. 
Let $\om$ be a free ultrafilter on $\N$. Parallel to $\ell^\infty(\mathcal R)$ in the complex case consider the 
real $C^*$-algebra 
$\ell^\infty(\mathcal R_r)=\{(r_k)_{k\in\N}\in\mathcal R_r^\N \mid \sup_{k\in \N} \|r_k\|<\infty\}.$ 
Further let $J_\om=\{(r_k)_k\in\ell^\infty(\mathcal R_r)\mid  \lim_{k\to\om}\tau(r_k^*r_k)^{1/2}=0\}$. 
Then $J_\om$ is a closed maximal ideal in $\ell^\infty(\mathcal R_r)$. The quotient $C^*$-algebra 
$\mathcal R_r^\om:=\ell^\infty(\mathcal R_r)/J_\om$ is called the ultrapower of $\mathcal R_r^\om$
and is a finite real von Neumann algebra. 
Since $I_\om=J_\om+\i J_\om$, where $I_\om$ is the closed ideal in 
$\ell^\infty(\mathcal R)$ used for the construction of $\mathcal R^\om$, we have
$\mathcal R^\omega=(\mathcal R_r+\i\mathcal R_r)^\omega=\mathcal R_r^\omega+\i\mathcal R_r^\omega$. 

\medskip
A final topic in this section is related to generating sets of real von Neumann algebras.
For $\m_r$ a real von Neumann algebra, an element $x$ of $\m_r$ is said to be {\em symmetric} if $x^*=x$
and {\em antisymmetric} if $x^*=-x$.
Clearly, writing an arbitrary $x\in\m$ as
\begin{equation}\label{eq:symantisym}
x=\frac{x+x^*}2+\frac{x-x^*}2,
\end{equation}
every element of $\m_r$ is the sum of of a symmetric and an antisymmetric element.

\begin{lemma}\label{lem:symgen}
Let $\m_r$ be any real von Neumann algebra and consider the real von Neumann algebra
$M_2(\m_r)$ endowed with the usual adjoint operation:
\begin{equation}\label{eq:adj}
\left(\begin{matrix}a&b\\c&d\end{matrix}\right)^*=
\left(\begin{matrix}a^*&c^*\\b^*&d^*\end{matrix}\right).
\end{equation}
Then $M_2(\m_r)$ has a generating set consisting of symmetric elements.
\end{lemma}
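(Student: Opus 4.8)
The plan is to exploit the $2\times2$ matrix structure to convert antisymmetric elements of $\m_r$ into symmetric elements of $M_2(\m_r)$. First I would emphasize why the naive attempt fails. By \eqref{eq:symantisym} every element of $\m_r$ is the sum of a symmetric and an antisymmetric element, but the symmetric elements \emph{alone} need not generate $\m_r$: if $\m_r$ is commutative, the real algebra they generate contains no nonzero antisymmetric element (for example $\m_r=\C$ with $*$ equal to complex conjugation has symmetric part $\R$, and the algebra generated by $\R$ is just $\R$). Thus the extra room provided by passing to $M_2(\m_r)$ is genuinely needed.

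The key observation is that the off-diagonal embedding interchanges the roles of symmetry and antisymmetry. Using \eqref{eq:adj}, for an antisymmetric $a\in\m_r$ (so $a^*=-a$) one has $\bigl(\begin{smallmatrix}0&a\\-a&0\end{smallmatrix}\bigr)^*=\bigl(\begin{smallmatrix}0&-a^*\\a^*&0\end{smallmatrix}\bigr)=\bigl(\begin{smallmatrix}0&a\\-a&0\end{smallmatrix}\bigr)$, so this matrix is symmetric; and for symmetric $s$ the matrix $\bigl(\begin{smallmatrix}0&s\\s&0\end{smallmatrix}\bigr)$ is symmetric as well. Together with the two symmetric matrices $e_{11}=\bigl(\begin{smallmatrix}1&0\\0&0\end{smallmatrix}\bigr)$ and $q=\bigl(\begin{smallmatrix}0&1\\1&0\end{smallmatrix}\bigr)$, these constitute my candidate generating set $G$ of symmetric elements, where in the off-diagonal generators $s$ and $a$ range over all symmetric and all antisymmetric elements of $\m_r$, respectively.

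Then I would verify that $G$ generates everything. From $e_{11}$ and $q$ one recovers all matrix units, since $qe_{11}=e_{21}$, $e_{11}q=e_{12}$, and $e_{22}=e_{21}q$. Multiplying $\bigl(\begin{smallmatrix}0&s\\s&0\end{smallmatrix}\bigr)$ on the left by $e_{11}$ gives $se_{12}$, and then on the right by $e_{21}$ gives $se_{11}=\bigl(\begin{smallmatrix}s&0\\0&0\end{smallmatrix}\bigr)$; the identical computation applied to $\bigl(\begin{smallmatrix}0&a\\-a&0\end{smallmatrix}\bigr)$ produces $ae_{11}$. For arbitrary $x\in\m_r$, writing $x=s+a$ as its symmetric and antisymmetric parts via \eqref{eq:symantisym} shows $xe_{11}=se_{11}+ae_{11}$ lies in the algebra generated by $G$, and conjugating by matrix units yields every $xe_{ij}$. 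Since each element of $M_2(\m_r)$ is the finite sum $\sum_{i,j}x_{ij}e_{ij}$, the real $*$-algebra generated by $G$ is already all of $M_2(\m_r)$, so no weak closure is even required.

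The only real content lies in the symmetry computation of the second paragraph, namely the recognition that antisymmetry becomes symmetry off the diagonal; once this embedding is in hand the remainder is the routine matrix-unit bookkeeping sketched above, and I anticipate no serious obstacle there.
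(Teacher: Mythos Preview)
Your proof is correct and follows essentially the same approach as the paper's: the key idea is the same off-diagonal embedding $\bigl(\begin{smallmatrix}0&a\\-a&0\end{smallmatrix}\bigr)$ for antisymmetric $a$, together with the two symmetric matrix units $e_{11}$ and $q$. The only cosmetic differences are that the paper embeds symmetric elements diagonally as $\bigl(\begin{smallmatrix}s&0\\0&0\end{smallmatrix}\bigr)$ rather than off-diagonally, and starts from an arbitrary generating set $S$ of $\m_r$ rather than all of $\m_r$; your write-up is in fact more explicit than the paper's in verifying that the proposed set generates.
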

\begin{proof}
Let $S\subseteq \m_r$ be a generating set for $\m_r$.
Using the trick~\eqref{eq:symantisym}, we may without loss of generality assume $S=S_\sym\cup S_\antisym$,
where $S_\sym$ consists of symmetric elements and $S_\antisym$ of antisymmetric elements.
Then
\[
\left\{
\left(\begin{matrix}1&0\\0&0\end{matrix}\right),\;
\left(\begin{matrix}0&1\\1&0\end{matrix}\right)
\right\}
\cup
\left\{
\left(\begin{matrix}x&0\\0&0\end{matrix}\right)\bigg| {}\; x\in\S_\sym
\right\}
\cup
\left\{
\left(\begin{matrix}0&x\\-x&0\end{matrix}\right)\bigg|{}\; x\in\S_\antisym
\right\}
\]
is a generating set for $M_2(\m_r)$ consisting of symmetric elements.
\end{proof}

We thank Narutaka Ozawa for providing the proof of the following proposition and for
allowing us to present it here.

\begin{prop}\label{prop:Taka}
Every real von Neumann algebra of type II$_1$
is isomorphic to $M_2(\m_r)$, for some real von Neumann algebra $\m_r$,
endowed with the usual adjoint operation~\eqref{eq:adj}.
\end{prop}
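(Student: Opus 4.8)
The plan is to produce, inside the given algebra, a system of $2\times2$ matrix units that is compatible with the real structure; such a system exhibits the algebra as $M_2$ of the corner cut out by $e_{11}$. Write $\mathcal A_r$ for the given real von Neumann algebra of type II$_1$, let $\mathcal A=\mathcal U(\mathcal A_r)=\mathcal A_r+\i\mathcal A_r$ be its complexification, and let $\alpha$ be the associated involutory $*$-antiautomorphism, so that $\mathcal A_r=\{x\in\mathcal A\mid\alpha(x)=x^*\}$ as in Remark~\ref{rem:complexification}. Concretely, I want projections $e_{11},e_{22}$ and partial isometries $e_{12},e_{21}$ in $\mathcal A$ with $e_{ij}e_{kl}=\delta_{jk}e_{il}$, $e_{11}+e_{22}=1$ and $e_{ij}^*=e_{ji}$, such that in addition $\alpha(e_{ij})=e_{ji}$; the last condition forces every $e_{ij}$ into $\mathcal A_r$ and makes the resulting isomorphism $\mathcal A\cong M_2(e_{11}\mathcal A e_{11})$ carry $\alpha$ to the entrywise transpose, i.e.\ restrict to $\mathcal A_r\cong M_2(\m_r)$ with $\m_r=e_{11}\mathcal A_r e_{11}$ and the adjoint~\eqref{eq:adj}.

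First I would choose $e_{11}$. Since $\mathcal A_r$ is of type II$_1$, the comparison theory for real von Neumann algebras provides a projection $p\in\mathcal A_r$ whose centre-valued trace is $\tfrac12$; being a projection of the real algebra it is automatically symmetric, i.e.\ $\alpha(p)=p^*=p$, and the same holds for $q:=1-p$. Set $e_{11}=p$, $e_{22}=q$. In the complex algebra $\mathcal A$ the projections $p$ and $q$ then have equal centre-valued trace, so by the comparison theorem there is a partial isometry $w\in\mathcal A$ with $w^*w=p$ and $ww^*=q$. The only thing that can fail is that $w$ need not lie in $\mathcal A_r$, equivalently $\alpha(w)\neq w^*$ in general; repairing this is the heart of the matter.

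The hard part is thus to correct the ``phase'' of $w$. Here I would measure the failure by the element $u:=w\,\alpha(w)$. Using $\alpha(p)=p$ and $\alpha(q)=q$ one checks that $\alpha(w)$ is a partial isometry from $q$ to $p$, that $u$ is a unitary in the corner $q\mathcal A q$, and — applying $\alpha$ to the definition of $u$ and using $\alpha^2=\id$ together with the order reversal — that $u$ is $\alpha$-fixed: $\alpha(u)=u$. The key observation is then that $\alpha$ restricts to the identity on the abelian algebra $W^*(u)$: on a commutative algebra an anti-homomorphism is a homomorphism, so $\alpha|_{W^*(u)}$ is a normal $*$-automorphism fixing the generators $u,u^*$ and hence fixing $W^*(u)$ pointwise. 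Consequently every Borel function of $u$ is $\alpha$-fixed; in particular a unitary square root $y\in W^*(u)$ of $u$, obtained by the Borel functional calculus, satisfies $\alpha(y)=y$ and $y^2=u$.

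Finally I would set $w':=y^*w$. Because $y$ is a unitary in $q\mathcal A q$, the element $w'$ is again a partial isometry with $w'^*w'=p$ and $w'w'^*=q$; and a direct computation of the new defect gives $w'\,\alpha(w')=y^*\,u\,\alpha(y^*)=y^*\,y^2\,y^*=q$, which is precisely the statement that $\alpha(w')=w'^*$, i.e.\ $w'\in\mathcal A_r$. Taking $e_{21}=w'$ and $e_{12}=w'^*$ completes a system of matrix units with $\alpha(e_{ij})=e_{ji}$, whence $\mathcal A_r\cong M_2(\m_r)$ with the adjoint~\eqref{eq:adj}, as required. I expect the genuine obstacle to be conceptual rather than computational: it is the realification step — knowing that the Murray--von Neumann equivalence of $p$ and $q$ in $\mathcal A$ can be implemented by an element of the smaller real algebra $\mathcal A_r$ — and the square-root-of-the-defect device above is what makes it go through, while the existence of the symmetric halving projection is where the hypothesis of type II$_1$ is really used.
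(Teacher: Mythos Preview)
Your argument is correct and proceeds along a genuinely different route from the paper's proof (which is due to Ozawa). You first produce a halving projection $p\in\mathcal A_r$ by invoking comparison theory for real von Neumann algebras, then take any partial isometry $w\in\mathcal A$ implementing $p\sim 1-p$ in the complexification and correct its phase by extracting a Borel square root of the defect $u=w\alpha(w)$, exploiting that $\alpha$ restricts to the identity on the commutative algebra $W^*(u)$. The paper instead avoids appealing to comparison theory in the real algebra: it builds a \emph{small} $\alpha$-invariant projection $q=p\vee\alpha(p)$ from any $p$ with $E(p)\le\tfrac13$, chooses a nonzero $x$ with $x^*x\le q$ and $xx^*\le1-q$, replaces $x$ by $\i x$ if necessary so that $y=x+\alpha(x^*)\ne0$, reads off from the polar decomposition of $y$ a nonzero partial isometry in $\n_r$ with orthogonal source and range, and then reaches $v^*v+vv^*=1$ by a maximality/exhaustion argument. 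Your square-root device is slick and delivers the full system of matrix units in one stroke, at the cost of importing the existence of a symmetric halving projection from the structure theory of real type II$_1$ algebras; the paper's symmetrize-then-polar-decompose approach is more self-contained, needing only the complex centre-valued trace and Zorn's lemma.
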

\begin{proof}
Let $\n$ be a (complex) von Neumann algebra of type II$_1$ endowed with
an involutory $*$-antiautomorphism $\alpha$ and let $\n_r=\{x\in\n\mid \alpha(x)=x^*\}$
be the corresponding real von Neumann algebra.
It will suffice to find a partial isometry $v\in\n_r$ such that $v^*v+vv^*=1$,
for this will imply that $\n_r$ has the desired $2\times 2$ matrix structure.
By a maximality argument, it will suffice to find a nonzero partial isometry $w\in\n_r$
so that $w^*w\perp ww^*$.
Let $E$ be the center-valued trace on $\n$.
Let $p\in\n$ be any nonzero projection such that $E(p)\le\frac13$ and let $q=p\vee\alpha(p)$.
Then $q\in\n_r$ and $E(q)\le\frac23$.
There is a nonzero element $x\in\n$ such that $x^*x\le q$ and $xx^*\le1-q$.
Multiplying by the imaginary number $\i$, if necessary, we may without loss of generality assume
$y=x+\alpha(x^*)$ is nonzero.
But $y\in\n_r$ and $y=(1-q)yq$.
In the polar decomposition $y=w|y|$ of $y$, we have that $w\in\n_r$ is a nonzero partial isometry,
$w^*w\le q$ and $ww^*\le(1-q)$.
\end{proof}

Combining the previous proposition and lemma, we get:
\begin{cor}\label{cor:II1symm}
Every real von Neumann algebra of type II$_1$ is generated by a set of symmetric elements.
\end{cor}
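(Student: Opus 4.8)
The statement is labelled as following ``immediately'' from the two preceding results, so the plan is simply to chain Proposition~\ref{prop:Taka} and Lemma~\ref{lem:symgen} together by transport of structure. Let $\m_r$ be a real von Neumann algebra of type II$_1$. First I would apply Proposition~\ref{prop:Taka} to produce a real von Neumann algebra $\n_r$ together with an isomorphism $\Phi\colon \m_r\to M_2(\n_r)$, where $M_2(\n_r)$ is equipped with the usual adjoint operation~\eqref{eq:adj}.

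Next I would feed $M_2(\n_r)$ into Lemma~\ref{lem:symgen}, obtaining a generating set $G$ for $M_2(\n_r)$ all of whose elements $g$ satisfy $g^*=g$. The final step is to pull $G$ back: I would set $G'=\Phi^{-1}(G)\subseteq\m_r$ and check that $G'$ is a symmetric generating set. Generation is preserved because $\Phi$ is an isomorphism of real von Neumann algebras, hence in particular weakly continuous with weakly continuous inverse, so $\Phi^{-1}$ carries a weakly dense generated subalgebra onto one; symmetry is preserved because $\Phi$ (and so $\Phi^{-1}$) commutes with the involution, whence $(\Phi^{-1}(g))^*=\Phi^{-1}(g^*)=\Phi^{-1}(g)$ for each $g\in G$.

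The only step that is not purely formal — and the one I would be careful to spell out — is that the isomorphism supplied by Proposition~\ref{prop:Taka} really respects the involution, so that symmetry survives pulling back along $\Phi^{-1}$. This is part of what ``isomorphic'' means for real von Neumann algebras (under the complexification dictionary of Remark~\ref{rem:complexification} it corresponds to a $*$-isomorphism intertwining the associated involutory $*$-antiautomorphisms), and once it is recorded the corollary follows at once.
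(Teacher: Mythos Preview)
Your proposal is correct and is exactly the argument the paper intends: the corollary is stated as an immediate consequence of Proposition~\ref{prop:Taka} and Lemma~\ref{lem:symgen}, and you have simply spelled out the transport-of-structure step that makes ``combining'' precise. Your care in noting that the isomorphism is $*$-preserving (so symmetry survives pullback) is the one nontrivial point, and you handle it correctly.
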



\section{The real version of Connes' embedding conjecture and proof of the real algebraic reformulation}
\label{sec:realcon}

Here is the real version of Connes' embedding conjecture:

\begin{con}[RCEC]\label{realconcon}
If $\om$ is a free ultrafilter on $\N$ and $\mathcal F_r$ is a real
II$_1$-factor with separable predual, then 
$\mathcal F_r$ can be embedded into an ultrapower $\mathcal R_r^\om$ of the real hyperfinite II$_1$-factor.
\end{con}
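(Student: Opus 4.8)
Since RCEC is the precise real analogue of CEC, an unconditional proof is not to be expected; the substantive content is to deduce RCEC from CEC, and this is what establishes the conjecture. So the plan is to assume CEC and, given a real II$_1$-factor $\mathcal F_r$ with separable predual, to construct an embedding $\mathcal F_r\hookrightarrow\mathcal R_r^\om$. First I would complexify: by Remark~\ref{rem:complexification}, $\mathcal F:=\mathcal U(\mathcal F_r)=\mathcal F_r+\i\mathcal F_r$ is a (complex) II$_1$-factor with separable predual, carrying the involutory $*$-antiautomorphism $\alpha$ for which $\mathcal F_r=\{a\in\mathcal F\mid\alpha(a)=a^*\}$. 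Invoking CEC for $\mathcal F$ produces a (necessarily trace-preserving) $*$-embedding $\varphi\colon\mathcal F\hookrightarrow\mathcal R^\om$. By Remark~\ref{rem:power} and the discussion of the real ultrapower, $\mathcal R^\om=\mathcal R_r^\om+\i\mathcal R_r^\om$ carries the involutory $*$-antiautomorphism $\alpha_\om$ induced from the transpose, with $\mathcal R_r^\om=\{a\in\mathcal R^\om\mid\alpha_\om(a)=a^*\}$.

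The difficulty is that $\varphi$ need not satisfy $\varphi\circ\alpha=\alpha_\om\circ\varphi$, so it need not send $\mathcal F_r$ into $\mathcal R_r^\om$. To force equivariance I would note that $\varphi':=\alpha_\om\circ\varphi\circ\alpha$ is again a $*$-homomorphism $\mathcal F\to\mathcal R^\om$, and pass to $M_2(\mathcal R^\om)$, which I equip with the involutory $*$-antiautomorphism $\gamma=\mathrm{Ad}_s\circ\beta$, where $\beta$ applies $\alpha_\om$ to each entry and then transposes, and $s=\left(\begin{smallmatrix}0&1\\1&0\end{smallmatrix}\right)$. A short computation then gives $\gamma(\Phi(x))=\Phi(\alpha(x))$ for $\Phi:=\diag(\varphi,\varphi')$ and all $x\in\mathcal F$. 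Hence $\Phi$ carries $\mathcal F_r$ into the real von Neumann algebra $\left(M_2(\mathcal R^\om)\right)_\gamma$, yielding a $*$-embedding of $\mathcal F_r$ into this real form.

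It remains---and this is the main obstacle---to identify $\left(M_2(\mathcal R^\om)\right)_\gamma$ with $\mathcal R_r^\om$. Since $\mathcal R^\om$ is non-separable, St\o rmer's uniqueness theorem does not apply directly, so I would argue at the level of antiautomorphisms. Writing $M_2(\mathcal R^\om)=\mathcal R^\om\otimes M_2(\C)$, one has $\beta=\alpha_\om\otimes t_2$ and $\gamma=\alpha_\om\otimes(\mathrm{Ad}_s\circ t_2)$, where $t_2$ is the transpose on $M_2(\C)$. The two antiautomorphisms $t_2$ and $\mathrm{Ad}_s\circ t_2$ of $M_2(\C)$ have the same real form $M_2(\R)$ and are therefore conjugate by an inner $*$-automorphism $\mathrm{Ad}_u$ (concretely $u\,u^{\mathrm t}=s$ can be solved by a unitary $u$); conjugating by $1\otimes u$ then identifies $\left(M_2(\mathcal R^\om)\right)_\gamma$ with $\left(M_2(\mathcal R^\om)\right)_\beta=M_2(\mathcal R_r^\om)$. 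Finally $M_2(\mathcal R_r^\om)\cong\mathcal R_r^\om$, since $M_2(\mathcal R_r)\cong\mathcal R_r$ by uniqueness of the real hyperfinite II$_1$-factor (St\o rmer, Theorem~\ref{realhyper}) and matrix amplification commutes with the ultrapower; this is exactly the $2\times2$ structure underlying Proposition~\ref{prop:Taka} and Lemma~\ref{lem:symgen}. Reconciling the two real forms attached to $\beta$ and $\gamma$ is the delicate point, and Corollary~\ref{cor:II1symm}, by reducing $\mathcal F_r$ to symmetric generators, provides an alternative, more hands-on route to the same conclusion via symmetric matricial microstates.
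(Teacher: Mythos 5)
Your reading of the task is the right one: the paper does not prove RCEC outright but establishes it modulo CEC, and your argument for CEC$\implies$RCEC is correct --- but it takes a genuinely different route from the paper's. The paper never argues directly at the von Neumann algebra level in this direction: it deduces RCEC from CEC by passing through the algebraic statement (b) of Theorem~\ref{thm:Ralgcon} (CEC$\implies$(b) being the direction of \cite{ksconnes} that was correct all along), and then proves (b)$\implies$RCEC via the real microstates characterization, Proposition~\ref{realprop317}, whose implication \eqref{realapproximate}$\implies$\eqref{realallultra} needs a generating set of \emph{symmetric} contractions --- supplying that set is precisely the role of Corollary~\ref{cor:II1symm}, i.e., of Ozawa's Proposition~\ref{prop:Taka} combined with Lemma~\ref{lem:symgen}. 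You instead invoke CEC once for the complexification and repair the failure of $\alpha$-equivariance of $\varphi$ by the doubling $\Phi=\diag(\varphi,\alpha_\om\circ\varphi\circ\alpha)$ into $M_2(\mathcal R^\om)$, then straighten the twisted involution $\gamma=\alpha_\om\otimes(\mathrm{Ad}_s\circ t_2)$ to $\beta=\alpha_\om\otimes t_2$ by $\mathrm{Ad}_{1\otimes u}$ with $uu^{\mathrm t}=s$; such a unitary exists (e.g.\ $u=e^{\i\pi/4}\tfrac{1}{\sqrt2}\mato 1&-\i\\ -\i&1\matc$), real forms transport under conjugation (if $\psi'=\mathrm{Ad}_v\circ\psi\circ\mathrm{Ad}_{v^*}$ then $\psi'(vyv^*)=(vyv^*)^*$ iff $\psi(y)=y^*$), and finally $(M_2(\mathcal R^\om))_\beta=M_2(\mathcal R_r^\om)\cong(M_2(\mathcal R_r))^\om\cong\mathcal R_r^\om$ by St\o rmer's uniqueness theorem, since $M_2(\mathcal R_r)$ is a real hyperfinite II$_1$-factor by Theorem~\ref{realhyper}(ii). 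All the computations check: $\gamma$ is involutory because $\beta(s)=s$; $\gamma(\Phi(x))=\Phi(\alpha(x))$ because $\alpha_\om^2=\id$; trace preservation and hence embeddability are automatic by uniqueness of the trace on a II$_1$-factor; and $(\mathcal R^\om)_{\alpha_\om}=\mathcal R_r^\om$ follows from $I_\om=J_\om+\i J_\om$ as in Section~\ref{sec:RvN}. What each approach buys: yours is self-contained at the operator-algebra level and bypasses trace-positive polynomials, Proposition~\ref{realprop317}, and Proposition~\ref{prop:Taka} entirely; the paper's detour through (b) is unavoidable for its purposes, since Theorem~\ref{thm:Ralgcon} is precisely about the algebraic statement, so Corollary~\ref{cor:II1symm} is needed there regardless. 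Two small cautions: your slogan ``same real form, hence conjugate'' is not a valid general principle for involutory $*$-antiautomorphisms, but on $M_2(\C)$ both involutions are of orthogonal type ($s$ is symmetric, not antisymmetric) and your explicit $u$ makes the step airtight; and you prove only CEC$\implies$RCEC, which is indeed all that is needed here --- the converse, which the paper gets from Proposition~\ref{prop:tens} via $\mathcal F\overline\otimes\mathcal F^\op$, is a separate matter.
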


We will prove that RCEC is equivalent to CEC, in the course of proving Theorem~\ref{thm:Ralgcon}.
We will use two preliminary results.
The first of these is the following real analogue of~\cite{ksconnes}*{Prop.\ 3.17}.

\begin{prop}\label{realprop317}
For every real II$_1$-factor $\mathcal F_r$ with separable predual and faithful 
trace $\tau$, the following statements are equivalent:
\begin{enumerate}[\rm(i)]\itemsep0pt
\item\label{realallultra}
For every free ultrafilter $\om$ on $\N$, $\mathcal F_r$ is embeddable
in $\mathcal R_r^\omega$;
\item\label{realoneultra} There is an ultrafilter $\om$ on $\N$
such that $\mathcal F_r$ is embeddable in $\mathcal R_r^\om$; 
\item\label{realputinar} For each $n\in\N$ and $f=f^*\in\R\ax$, having positive trace on all symmetric contractions in $\R^{s\times s}, s\in\N$, 
implies that $\tau(f(\ushort A))\geq0$ for all symmetric contractions $\ushort A\in\mathcal F_r^n$;
\item\label{realapproximate}
For all $\ep\in\R_{>0}$, $n,k\in\N$ and symmetric contractions
$A_1,\dots,A_n\in\mathcal F_r$, there is an $s\in\N$ and symmetric contractions
$B_1,\dots,B_n\in\R^{s\times s}$ such that for all $w\in\ax_k$:
$$|\tau(w(A_1,\dots,A_n))-\Tr(w(B_1,\dots,B_n))|<\ep,$$
where $\Tr$ is the normalized trace on  $s\times s$ matrices.
\end{enumerate}
\end{prop}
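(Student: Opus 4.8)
The plan is to prove the four statements equivalent by a cycle that mirrors the complex result \cite{ksconnes}*{Prop.\ 3.17}, the one genuinely new ingredient being Corollary~\ref{cor:II1symm}: since $\mathcal F_r$ is generated by symmetric contractions, the ``symmetric'' data appearing in (iii) and (iv) already control all of $\mathcal F_r$. Concretely I would prove (i)$\Rightarrow$(ii)$\Rightarrow$(iv)$\Rightarrow$(i) to link the three operator-algebraic statements, and then (iv)$\Rightarrow$(iii) and (iii)$\Rightarrow$(iv) to bring in the Positivstellensatz-type condition.

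I begin with the routine directions. The implication (i)$\Rightarrow$(ii) is immediate since free ultrafilters on $\N$ exist. For (ii)$\Rightarrow$(iv), fix an embedding $\pi\colon\mathcal F_r\hookrightarrow\mathcal R_r^\om$; it is trace-preserving, and each $\pi(A_i)$ is represented by a bounded sequence $(r^{(i)}_k)_k$ in $\mathcal R_r$, so the word-traces $\Tr(w(r^{(1)}_k,\dots,r^{(n)}_k))$ converge along $\om$ to $\tau(w(A_1,\dots,A_n))$. Approximating each $r^{(i)}_k$ in $\|\cdot\|_2$ by an element of a finite-dimensional factor $R_N\cong\R^{2^N\times 2^N}$ (Theorem~\ref{realhyper}(ii)), taking symmetric parts via \eqref{eq:symantisym} and truncating to contractions, one finds for each $\ep,k$ a single index along which all word-traces of length $\le k$ agree with those of $\ushort A$ to within $\ep$. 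For (iv)$\Rightarrow$(iii): given $f=f^*$ that is trace-nonnegative on all symmetric matrix contractions and symmetric contractions $\ushort A\in\mathcal F_r^n$, I pick $k\ge\deg f$ and, for each $\ep$, matrix approximants $\ushort B$ as in (iv); then $\tau(f(\ushort A))\ge\Tr(f(\ushort B))-C\ep\ge-C\ep$ with $C$ depending only on the coefficients of $f$, and $\ep\to0$ gives $\tau(f(\ushort A))\ge0$.

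The substantive operator-algebraic step is (iv)$\Rightarrow$(i). By Corollary~\ref{cor:II1symm} and separability of the predual, fix a countable generating set $\{A_i\}_{i\in\N}$ of $\mathcal F_r$ consisting of symmetric contractions. Given any free ultrafilter $\om$, I apply (iv) with $n=k=m$ and $\ep=1/m$ to $A_1,\dots,A_m$, producing symmetric matrix contractions $B_1^{(m)},\dots,B_m^{(m)}\in\R^{s_m\times s_m}$ whose word-traces of length $\le m$ match those of the $A_i$ to within $1/m$. Embedding $\R^{s_m\times s_m}$ trace-preservingly into $\mathcal R_r$ (which absorbs matrix algebras, being the unique real hyperfinite II$_1$-factor) via maps $\iota_m$, I set $\pi(A_i)=[(\iota_m(B_i^{(m)}))_m]\in\mathcal R_r^\om$. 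The moment matching forces $\tau(w(\pi(\ushort A)))=\tau(w(\ushort A))$ for every word $w$, so $\pi$ is a trace-preserving $*$-homomorphism on the real $*$-algebra generated by the $A_i$; faithfulness of $\tau$ makes it $\|\cdot\|_2$-isometric, hence it extends to an embedding of $\mathcal F_r$ into $\mathcal R_r^\om$. Since $\om$ was arbitrary, (i) follows, and (i),(ii),(iv) are equivalent.

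Finally I would prove (iii)$\Rightarrow$(iv) by contraposition through a finite-dimensional separation. Because $\mathcal F_r$ is a \emph{real} algebra with $\R$-valued trace, every $\tau(w(\ushort A))$ is real, so for fixed $n,k$ both $(\Tr w(\ushort B))_w$ and $(\tau(w(\ushort A)))_w$ live in the finite-dimensional real space indexed by the words of $\ax_k$. Let $\Gamma$ be the closure of the set of matrix moment vectors over all symmetric contractions $\ushort B\in\R^{s\times s}$, $s\in\N$; since direct sums realise rational convex combinations, $\Gamma$ is closed and convex, and failure of (iv) says exactly that the moment vector of $\ushort A$ lies outside $\Gamma$. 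A separating hyperplane then yields real coefficients $(c_w)_w$ and $\gamma$ with $\sum_w c_w\Tr(w(\ushort B))\ge\gamma>\sum_w c_w\tau(w(\ushort A))$ for all such $\ushort B$; putting $f=\tfrac12(g+g^*)$ with $g=\sum_w c_w w-\gamma$, which alters no trace value since $\tau(x^*)=\tau(x)$ and $\Tr(B^*)=\Tr(B)$, gives $f=f^*$ with $\Tr(f(\ushort B))\ge0$ everywhere yet $\tau(f(\ushort A))<0$, contradicting (iii). The main obstacle is conceptual rather than computational: the embedding step (iv)$\Rightarrow$(i) succeeds only because symmetric contractions generate $\mathcal F_r$, which is automatic over $\C$ but over $\R$ rests on Ozawa's Proposition~\ref{prop:Taka} and Corollary~\ref{cor:II1symm}; the separation argument, though it is the analytic heart, becomes routine once one observes that the real structure makes all moment data real and finite-dimensional.
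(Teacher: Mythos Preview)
Your proof is correct and follows essentially the same approach as the paper: the paper runs the single cycle (i)$\Rightarrow$(ii)$\Rightarrow$(iii)$\Rightarrow$(iv)$\Rightarrow$(i), deferring the intermediate steps to the complex proof in \cite{ksconnes}*{Prop.\ 3.17} and Theorem~\ref{realhyper}, and invoking Corollary~\ref{cor:II1symm} exactly where you do, namely for (iv)$\Rightarrow$(i). Your reorganisation into (i)$\Rightarrow$(ii)$\Rightarrow$(iv)$\Rightarrow$(i) together with (iii)$\Leftrightarrow$(iv) is a harmless variation that simply makes the separation argument for (iii)$\Rightarrow$(iv) explicit rather than citing the complex case; the identification of Corollary~\ref{cor:II1symm} (and behind it Proposition~\ref{prop:Taka}) as the one genuinely new ingredient is exactly right.
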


\begin{proof}
The implication \eqref{realallultra}$\implies$\eqref{realoneultra} is obvious. The proof of 
\eqref{realoneultra}$\implies$\eqref{realputinar}$\implies$\eqref{realapproximate} works as in the complex case
(see the proof of \cite{ksconnes}*{Prop.\ 3.17}),
where for \eqref{realoneultra}$\implies$\eqref{realputinar} one uses the fact (see Theorem \ref{realhyper})
that $\mathcal R_r$ is generated by a union of an increasing 
sequence of  real matrix algebras. 

For the implication \eqref{realapproximate}$\implies$\eqref{realallultra}, 
by Corollary~\ref{cor:II1symm}, $\mathcal F_r$ has a generating set $A_1,A_2,\dots$ that is a sequence of symmetric contractions.
The rest of the proof then works as in the complex case, {\em cf.} also Theorem \ref{realhyper}(iii).
\end{proof}

For a von Neumann algebra $\m$, we let $\m^\op$ denote the opposite von Neumannn algebra of $\m$.
This is the algebra that is equal to
$\m$ as a set and with the same $*$-operation, but with multiplication operation $\cdot$
defined by $a\cdot b=ba$.
It is well known that $\m^\op$ is also a von Neumann algebra, (and it is very easy to see this in the case that
$\m$ has a normal faithful tracial state).
A $*$-antiautomorphism of $\m$ is precisely an isomorphism $\m\to\m^\op$ of von Neumann algebras.

The following proposition is true for arbitrary von Neumann algebras,
but since here we need it only for finite von Neumann algebras,
for convenience and ease of proof we state it only in this case.

\begin{prop}\label{prop:tens}
Let $\m$ be a von Neumann algebra with a normal, faithful, tracial state $\tau$.
Then the von Neumann algebra tensor product $\m\overline{\otimes}\m^\op$ has an involutory $*$-antiautomorphism.
\end{prop}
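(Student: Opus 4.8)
The plan is to exhibit the involutory $*$-antiautomorphism explicitly as the \emph{flip} on the two tensor factors. The crucial observation is that $\m^\op$ and $\m$ share the same underlying set, the same $*$-operation and the same weak topology, differing only in their multiplication; moreover the given trace $\tau$ is again a faithful, normal tracial state on $\m^\op$ (since $\tau(b_1\cdot_\op b_2)=\tau(b_2b_1)=\tau(b_1b_2)$, and $\m^\op$ has the same positive cone as $\m$). In particular this reconfirms that $\m^\op$ is a von Neumann algebra and equips $\m\overline{\otimes}\m^\op$ with the faithful normal tracial state $T=\tau\otimes\tau$. Because the two tensor factors have a common underlying set, the assignment $a\otimes b\mapsto b\otimes a$ makes sense as a map $\alpha$ of $\m\overline{\otimes}\m^\op$ to itself; I would first define it on elementary tensors and then argue that it extends to all of $\m\overline{\otimes}\m^\op$.

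I would then verify the three algebraic properties on elementary tensors. That $\alpha$ is $*$-preserving and involutory is immediate, since $\alpha((a\otimes b)^*)=\alpha(a^*\otimes b^*)=b^*\otimes a^*=\alpha(a\otimes b)^*$ and $\alpha^2(a\otimes b)=\alpha(b\otimes a)=a\otimes b$. The key point is antimultiplicativity: using that the second factor carries the opposite product, one computes $\alpha\big((a_1\otimes b_1)(a_2\otimes b_2)\big)=\alpha(a_1a_2\otimes b_2b_1)=b_2b_1\otimes a_1a_2$, while $\alpha(a_2\otimes b_2)\,\alpha(a_1\otimes b_1)=(b_2\otimes a_2)(b_1\otimes a_1)=b_2b_1\otimes a_1a_2$, so that $\alpha(XY)=\alpha(Y)\alpha(X)$. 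Equivalently, and more conceptually, $\alpha$ is the composite of the flip isomorphism $\m\overline{\otimes}\m^\op\to\m^\op\overline{\otimes}\m$ with the canonical identification $\m^\op\overline{\otimes}\m=(\m\overline{\otimes}\m^\op)^\op$ arising from $(A\overline{\otimes}B)^\op=A^\op\overline{\otimes}B^\op$ together with $(\m^\op)^\op=\m$; as recalled above, a $*$-isomorphism onto the opposite algebra is exactly a $*$-antiautomorphism.

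It remains to see that $\alpha$ is genuinely defined on the von Neumann algebra tensor product and is normal, rather than only on the algebraic tensor product, and this is the step I expect to require the most care. I would settle it using the trace: $\alpha$ preserves $T$, since $T(\alpha(a\otimes b))=\tau(b)\tau(a)=T(a\otimes b)$, so $\alpha$ is isometric for the Hilbert norm $\|x\|_2=T(x^*x)^{1/2}$ and hence extends continuously to the $L^2$-completion, whence it is $\sigma$-weakly continuous and maps $\m\overline{\otimes}\m^\op$ onto itself. Combined with the algebraic identities above, this shows that $\alpha$ is a well-defined normal involutory $*$-antiautomorphism of $\m\overline{\otimes}\m^\op$, which is exactly what is required.
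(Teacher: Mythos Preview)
Your proof is correct and follows essentially the same approach as the paper: define the flip $\alpha(a\otimes b)=b\otimes a$ on elementary tensors, check $*$-preservation and antimultiplicativity directly, and use trace-preservation to extend $\alpha$ from the algebraic tensor product to the von Neumann algebra tensor product. The paper is somewhat terser (it does not spell out the $L^2$-isometry argument or the conceptual description via $(A\overline{\otimes}B)^\op=A^\op\overline{\otimes}B^\op$), but the idea and structure are the same.
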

\begin{proof}
Let $\n=\m\overline{\otimes}\m^\op$
and let $\alpha:\m\otimes\m^\op\to\n$ be the linear map defined on the algebraic tensor product that satisfies
$\alpha(a\otimes b)=b\otimes a$.
Then $\alpha$ is $*$-preserving, and antimultiplicative.
Indeed, we have
\[
\alpha\left((a\otimes b)(c\otimes d)\right)
=\alpha(ac\otimes db)
=db\otimes ac
=(d\otimes c)(b\otimes a)
=\alpha(c\otimes d)\,\alpha(a\otimes b).
\]
Since $\alpha$ is trace-preserving, it extends to an isomorphism $\n\to\n^\op$ of von Neumann algebras,
i.e., a $*$-antiautomorphism of $\n$.
Moreover, it is clear that $\alpha^2=\id$.
\end{proof}

Now we are ready to prove Theorem~\ref{thm:Ralgcon}.
For convenience, the two conditions in that theorem are restated here along with a third, which we will show are all
equivalent.
\begin{thm}
The following statements are equivalent:
\begin{enumerate}[{\rm(a)}]\itemsep0pt
\item 
{\rm CEC} is true.
\item If $f=f^*\in\R\ax$ and if $\tr(f(\ushort A))\ge0$ for all tuples $\ushort A$ of
symmetric contractions in $\R^{s\times s}$ for all $s\in\N$,
then for every $\ep\in\R_{>0}$, $f+\ep$ is cyclically equivalent to an element of $M_\R$.
\item 
{\rm RCEC} is true.
\end{enumerate}
\end{thm}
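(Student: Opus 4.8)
The plan is to establish the cycle (a)$\Rightarrow$(b)$\Rightarrow$(c)$\Rightarrow$(a). Throughout, the device mediating between real and complex positivity is the realification $*$-homomorphism $\rho\colon\C^{s\times s}\to\R^{2s\times 2s}$ sending $a+\i b\mapsto\mato a&-b\\b&a\matc$ entrywise: it is isometric (hence carries contractions to contractions), sends self-adjoint matrices to symmetric matrices, and satisfies $\Tr(\rho(Y))=\Re\Tr(Y)$ for the normalized traces. This $\rho$ is exactly what converts complex self-adjoint data into real symmetric data, and its use is what repairs the conflation of real and complex trace-positivity responsible for the original gap.

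For (c)$\Rightarrow$(a) I would take a II$_1$-factor $\mathcal F$ with separable predual and normal faithful trace, form $\mathcal N=\mathcal F\overline\otimes\mathcal F^\op$, and use Proposition~\ref{prop:tens} to equip $\mathcal N$ with an involutory $*$-antiautomorphism $\alpha$. Then $\mathcal N_\alpha$ is a real II$_1$-factor with separable predual (it is a factor since $\mathcal U(\mathcal N_\alpha)=\mathcal N$ is), so RCEC embeds it into $\mathcal R_r^\omega$; complexifying and using $\mathcal R^\omega=\mathcal R_r^\omega+\i\mathcal R_r^\omega$ gives $\mathcal N=\mathcal U(\mathcal N_\alpha)\hookrightarrow\mathcal R^\omega$. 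Since $\mathcal F\hookrightarrow\mathcal N$ via $a\mapsto a\otimes 1$, this yields $\mathcal F\hookrightarrow\mathcal R^\omega$, i.e.\ CEC. This is the step that manufactures the missing antiautomorphism, and it is short once Proposition~\ref{prop:tens} is available.

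For (b)$\Rightarrow$(c) I would verify condition~(iii) of Proposition~\ref{realprop317} for an arbitrary real II$_1$-factor $\mathcal F_r$ with trace $\tau$. If $f=f^*\in\R\ax$ has nonnegative trace on all symmetric matrix contractions, then by (b) we have $f+\ep\csim g_\ep$ with $g_\ep\in M_\R$ for every $\ep>0$; evaluating at symmetric contractions $\ushort A\in\mathcal F_r^n$, each element of $M_\R$ becomes a positive operator (as $1-A_i^2\ge 0$), so $\tau(g_\ep(\ushort A))\ge 0$, while cyclic equivalence preserves the trace. Hence $\tau(f(\ushort A))\ge-\ep$ for all $\ep>0$, so $\tau(f(\ushort A))\ge 0$, which is condition~(iii); Proposition~\ref{realprop317} then gives the embedding into $\mathcal R_r^\omega$, i.e.\ RCEC.

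The substantive leg is (a)$\Rightarrow$(b), the Positivstellensatz direction, which I would prove by contradiction along the lines of the complex argument behind Theorem~\ref{thm:algcon}. If $f+\ep_0$ were cyclically equivalent to no element of $M_\R$, a Hahn--Banach separation (using that $M_\R$ is archimedean) produces a tracial state $L$ on $\R\ax$ that is nonnegative on $M_\R$, vanishes on commutators, and has $L(f)<0$. The GNS construction yields a finite real von Neumann algebra $\mathcal M_r$ generated by symmetric contractions $x_i=\pi(X_i)$ with $\bar L(f(\ushort x))<0$; complexifying $\mathcal M=\mathcal U(\mathcal M_r)$ and embedding it trace-preservingly into a complex II$_1$-factor (a standard free-product construction), CEC places $\mathcal M\hookrightarrow\mathcal R^\omega$, whence the moments of the $x_i$ are approximable by self-adjoint contractions $B_i\in\C^{s\times s}$ with $\Tr(f(\ushort B))\approx\bar L(f(\ushort x))<0$. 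Applying $\rho$ now gives real symmetric contractions $\rho(B_i)$ with $\Tr(f(\rho(\ushort B)))=\Re\Tr(f(\ushort B))<0$, contradicting the hypothesis that $f$ is trace-nonnegative on all symmetric matrix contractions. I expect this final descent to be the main obstacle: complex self-adjoint positivity is strictly stronger than real symmetric positivity---witness $\i(X_1X_2X_3-X_3X_2X_1)$---so CEC cannot be applied naively, and it is the realification $\rho$, together with the reality of the moments of symmetric elements, that makes the reduction legitimate.
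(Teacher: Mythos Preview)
Your argument for (b)$\Rightarrow$(c) and (c)$\Rightarrow$(a) is correct and coincides with the paper's: both use Proposition~\ref{realprop317}(iii) for the former, and Proposition~\ref{prop:tens} applied to $\mathcal F\overline\otimes\mathcal F^\op$ followed by complexification of the real embedding for the latter.

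The only difference is in (a)$\Rightarrow$(b). The paper does not reprove this implication at all; it simply cites \cite{ksconnes}, where this direction was established correctly (the gap in \cite{ksconnes} was only in the converse). Your self-contained argument via Hahn--Banach separation, GNS, passage to a II$_1$-factor, CEC, and the realification $\rho$ is sound, but it is more machinery than the present paper requires. In fact your own observation about $\rho$ yields a much shorter route once Theorem~\ref{thm:algcon} is in hand: for $f=f^*\in\R\ax$, the map $\rho$ shows that trace-nonnegativity on real symmetric contractions is \emph{equivalent} to trace-nonnegativity on complex self-adjoint contractions (your counterexample $\i(X_1X_2X_3-X_3X_2X_1)$ lies outside $\R\ax$, which is exactly why it does not obstruct this); hence Theorem~\ref{thm:algcon} gives $f+\ep\csim g$ with $g\in M_\C$, and taking real parts of both the certificate and the commutator identity yields $f+\ep\csim\Re(g)\in M_\R$ over $\R$. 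So the elaborate GNS/factor detour can be avoided entirely.
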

\begin{proof}
As remarked after the statement of Theorem~\ref{thm:Ralgcon}, (a)$\implies$(b) was proved in~\cite{ksconnes}.

The implication (b)$\implies$(c) follows from Proposition~\ref{realprop317}.
Indeed, if $\mathcal F_r$ is a real II$_1$-factor with separable predual, then (b) implies that condition
(iii) of Proposition~\ref{realprop317} holds;
by~(i) of that proposition, it follows that $\mathcal F_r$ embeds in $\mathcal R^\om_r$.

For (c)$\implies$(a), we assume that {\rm RCEC} is true and we will show that every 
II$_1$-factor $\mathcal F$ with separable predual can be embedded into $\mathcal R^\om$.
Suppose first that $\mathcal F$ has an involutory $*$-antiautomorphism $\alpha$.
Then $\mathcal F$ can be written as 
$\mathcal F=\mathcal F_r+\i\mathcal F_r$, where $\mathcal F_r$ is the real II$_1$-factor inside $\mathcal F$ 
corresponding to $\alpha$ as in Remark \ref{rem:complexification}.
By {\rm RCEC} there exists an embedding $\iota$ of $\mathcal F_r$ into 
$\mathcal R_r^\om$. This implies by $\C$-linear extension of $\iota$ that $\mathcal F=\mathcal F_r+\i\mathcal F_r$ 
embeds into $\mathcal R_r^\omega+\i\mathcal R_r^\om=(\mathcal R_r+\i\mathcal R_r)^\omega=\mathcal R^\om$.
Now if $\mathcal F$ is any II$_1$-factor, by the above case and Proposition~\ref{prop:tens}, the
II$_1$-factor $\mathcal F\overline\otimes\mathcal F^\op$ embeds in $R^\om$, and from the identification of $\mathcal F$
with $\mathcal F\otimes1\subseteq\mathcal F\overline\otimes\mathcal F^\op$, we get that $\mathcal F$ embeds in $R^\om$.
\end{proof}

\begin{bibdiv}
\begin{biblist}


\bib{aru}{book}{
   author={Ayupov, Shavkat},
   author={Rakhimov, Abdugafur},
   author={Usmanov, Shukhrat},
   title={Jordan, real and Lie structures in operator algebras},
   series={Mathematics and its Applications},
   volume={418},
   publisher={Kluwer Academic Publishers Group},
   place={Dordrecht},
   date={1997},
}

\bib{connes}{article}{
   author={Connes, Alain},
   title={Classification of injective factors. Cases $II_{1},$
   $II_{\infty },$  $III_{\lambda },$  $\lambda \not=1$},
   journal={Ann. of Math. (2)},
   volume={104},
   date={1976},
   pages={73--115},
}

\bib{G83}{article}{
   author={Giordano, T.},
   title={Antiautomorphismes involutifs des facteurs de von Neumann
   injectifs. I},
   journal={J. Operator Theory},
   volume={10},
   date={1983},
   pages={251--287},
}

\bib{ksconnes}{article}{
  author={Klep, Igor},
  author={Schweighofer, Markus},
  title={Connes' embedding conjecture and sums of Hermitian squares},
  journal={Adv. Math.},
  volume={217},
  year={2008},
  pages={1816--1837},
}

\bib{Li}{book}{
   author={Li, Bingren},
   title={Real operator algebras},
   publisher={World Scientific Publishing Co. Inc.},
   place={River Edge, NJ},
   date={2003},
}

\bib{Oz}{article}{
  author={Ozawa, Narutaka},
  title={About the Connes embedding conjecture --- algebraic approaches},
  eprint={http://arxiv.org/abs/1212.1703}
}

\bib{St67}{article}{
   author={St{\o}rmer, Erling},
   title={On anti-automorphisms of von Neumann algebras},
   journal={Pacific J. Math.},
   volume={21},
   date={1967},
   pages={349--370},
}
		
\bib{St68}{article}{
   author={St{\o}rmer, Erling},
   title={Irreducible Jordan algebras of self-adjoint operators},
   journal={Trans. Amer. Math. Soc.},
   volume={130},
   date={1968},
   pages={153--166},
}
		
\bib{St80}{article}{
   author={St{\o}rmer, Erling},
   title={Real structure in the hyperfinite factor},
   journal={Duke Math. J.},
   volume={47},
   date={1980},
   pages={145--153},
}

\end{biblist}
\end{bibdiv}

\end{document}